\documentclass[a4paper,12pt]{article}
\usepackage{amsmath,amssymb,amsthm}
\usepackage[top=2.5cm,bottom=2cm,left=2.5cm,right=2.5cm]{geometry}
\usepackage[margin=1cm,%
font=small,%
format=hang,%
labelsep=period,%
labelfont=bf]{caption}

\usepackage{graphicx}
\usepackage{epstopdf}

\renewcommand{\thefootnote}{\fnsymbol{footnote}}

\usepackage{enumerate}
\usepackage{hyperref}
\usepackage{comment}
\usepackage[normalem]{ulem}
\excludecomment{longver}

\newtheorem{lma}{Lemma}
\newtheorem{thm}{Theorem}
\newtheorem{cor}{Corollary}

\newcommand{\vek}[1]{\mathbf{#1}}

\DeclareMathOperator{\supp}{supp}

%\DeclareMathOperator{\rank}{rk}

%\newcommand{\C}{\mathcal{C}}
%\newcommand{\D}{\mathcal{D}}

%\newproof{proof}{Proof}

\begin{document}
	
	\thispagestyle{empty}
	
	\begin{center}
		
		{\Large \bf On binary codes with distances $d$ and $d+2$}
		
		\vspace{3mm}
		
		Ivan Landjev$^{a,b}$ and Konstantin Vorob'ev$^{b}$
%		 \thanks{E-mail address: \texttt{i.landjev@nbu.bg, konstantin.vorobev@gmail.com}}
		
		\vspace{3mm}

		\let\thefootnote\relax\footnotetext{E-mail addresses: \texttt{i.landjev@nbu.bg (Ivan Landjev), konstantin.vorobev@gmail.com (Konstantin Vorob'ev)}}
		{\it $^a$New Bulgarian University, 21 Montevideo str., 1618 Sofia, Bulgaria \\
			$^b$Institute of Mathematics and Informatics,
			Bulgarian Academy of Sciences,\\ 8 Acad G. Bonchev str., 1113 Sofia, Bulgaria}
		
	\end{center}
	
	\vspace{3mm}
	
	\begin{abstract}
		We consider the problem of finding 
		$A_2(n,\{d_1,d_2\})$ defined as the maximal size of a 
		binary (non-linear) code of length $n$ with two distances
		$d_1$ and $d_2$. Binary codes with distances $d$ and $d+2$ 
		of size $\sim\frac{n^2}{\frac{d}{2}(\frac{d}{2}+1)}$ can be obtained from $2$-packings 
		of an $n$-element set by blocks of cardinality $\frac{d}{2}+1$.
		This value is far from the upper bound
		$A_2(n,\{d_1,d_2\})\le1+{n\choose2}$ proved recently by Barg et al. 
		
		In this paper we prove that for every fixed $d$ ($d$ even)
		there exists an integer $N(d)$ such that for every
		$n\ge N(d)$ it holds
		$A_2(n,\{d,d+2\})=D(n,\frac{d}{2}+1,2)$,
		or, in other words, optimal codes are isomorphic to constant weight codes.
		We prove also estimates on $N(d)$ for $d=4$ and $d=6$.
	\end{abstract}
	
	\noindent{\bf Keywords:} two weight codes, codes with two distances, codes of constant weight.\\
	\noindent{\it 2000 MSC:} 05A05, 05A20, 94B25, 94B65.
	%\begin{keyword}
	%\texttt Wiener index  \sep \v{S}olt\'{e}s problem
%	\MSC[2020] 05C09 \sep 05C12 \sep 05C92
	%\end{keyword}

	%\linenumbers

\section{Preliminaries}
\label{sec:preliminaries}

In this paper we consider binary non-linear codes with two distances.
The alphabet throughout the paper will be $\Omega=\{0,1\}$. A binary code
of length $n$ and size $M$, or an $(n,M)$-code,
is  a subset $C$ of $\Omega^n$.
A code $C$ is said to be a two-distance code if
\[\{d(\vek{u},\vek{v})\mid \vek{u},\vek{v}\in C,\vek{u}\ne\vek{v}\}=\{d_1,d_2\},\]
where $1\le d_1<d_2\le n$ are integers. A two-distance
binary code of length $n$, size $M$, and distances $d_1$ and $d_2$ is 
referred to as an $(n,M,\{d_1,d_2\})$-code.
If the size of the code is not specified we speak of an $(n,\{d_1,d_2\})$-code.
The maximum size $M$ of a binary code of length $n$
with distances $d_1$ and $d_2$ is denoted by $A_2(n,\{d_1,d_2\})$.

A natural problem is to determine the exact value of  $A_2(n,\{d_1,d_2\})$
for given positive integers $n, d_1$ and $d_2$. Barg et al. proved in \cite{Barg-et-al} 
that $A_2(n,\{d_1,d_2\})$ is upper bounded by a quadratic function:
\begin{equation}
	\label{eq:barg}
	A_2(n,\{d_1,d_2\})\le 1+{n\choose2},
\end{equation}
for all $n\ge6$. 
The result was proved by embedding a binary code into 
the sphere $S^{n-1}$ and then using bounds for spherical two-distance sets.
In fact, the bound (\ref{eq:barg}) is achieved: it is known that
$A_2(n,\{2,4\})={n\choose 2}+1$ for $n\ge6$. 
This bound is an improvement on a similar bound from \cite{LRV23}:
\[A_2(n,\{d_1,d_2\})\le{n+2\choose2},\]	
proved by a different technique. If we impose some restrictions on $d_1$ and $d_2$, it turns out that $A_2(n,\{d_1,d_2\}$ is bounded by a linear function. So, if
$d_2>2d_1$, one has $A_2(n,\{d_1,d_2\})\le n+1$. Furthermore, for $d$ even
and $\delta$ odd, one has
\[A_2(n,\{d,d+\delta\})\le\left\{
\begin{array}{ll}
	n+1 & \text{ if } d \text{ is even}, \\
	n+2 & \text{ if } d \text{ is odd}.
\end{array}
\right.\]
Let us note that the value of $A_2(n,\{d,d+\delta\})$ is of interest when both $d$ and $\delta$ are even. In case of $\delta$ odd the cardinality is bounded by a function which is linear in $n$, and the case when $d$ is odd, and $\delta$ -- even is impossible \cite{LRV23}.

I was noticed by Zinoviev \cite{Zin83} (see also \cite{LRV23,SZ69}) that the characteristic vectors of the blocks of a family of $k$-element subsets with two intersection numbers $x<y$ is a binary two distance code. In particular, quasisymmetric designs give rise to two distance codes. If there exists a 2-packing of an $n$-element set with $M$ blocks of size $k$,
then there exists a binary $(n,M,\{d,d+2\})$-code with $d=2k-2$ and cardinality
$\displaystyle M\le\frac{n(n-1)}{k(k-1)}$.
Equality is achieved iff there exists a 2-$(n,k,1)$ design. The maximal size of a 2-$(n,k,1)$
packing
is known for small values of $n$ (cf. \cite{Stinson-etal}). 
The maximal size of an $(n,M,\{d,d+2\})$-code, $d$-even, is at least equal to the maximal size of a 2-packing by $\displaystyle (\frac{d}{2}+1)$-tuples denoted by
$D(n,\frac{d}{2}+1,2)$, i.e.
\[D(n,\frac{d}{2}+1,2)\le A_2(n,\{d,d+2\})\le {n\choose2}+1.\]

The goal of this paper is to prove that for large values of $n$ and fixed $d$, 
one has $A_2(n,\{d,d+2\})=D(n,\frac{d}{2}+1,2)$. The proof relies on a classical result by Erd\H{o}s
and Hanani \cite{EH64}. For small values of $d$ ($d=4,6$) we give estimates on the smallest
$n$ for which equality takes place.

\section{Binary codes with distances $d$ and $d+2$}
\label{sec:dd+2}

In this section we consider binary codes with parameters $(n,\{d,d+2\})$ with even $d$.
As we discussed above, for $\delta$ odd $A_2(n,\{d,d+\delta\})$ is bounded by a linear function, while the case $d$ odd, $\delta$ even is impossible (we cannot have a code in which all distances are odd, and do actually appear).

We denote by $D(n,k,t)$ the maximal number of blocks in a $t$-$(v,k,1)$ packing.
A classical result by Erd\H{o}s and Hanani \cite{EH64} establishes
the asymptotic growth of $D(n,k,t)$ as $n\to\infty$. We state a special 
case of their result that will be needed in the sequel.

\begin{thm}
	\label{thm:EH64}
	Let $k\ge3$ be a fixed integer. Then
	\[\lim_{n\to\infty} D(n,k,2)\frac{k(k-1)}{n(n-1)}=1\]
\end{thm}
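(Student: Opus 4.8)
The plan is to prove the two inequalities $\limsup\le 1$ and $\liminf\ge 1$ separately. The upper bound is the easy direction and follows from double counting. In a $2$-$(n,k,1)$ packing every block, being a $k$-subset, covers exactly $\binom{k}{2}$ unordered pairs of points, and the packing condition forces these pair-sets to be pairwise disjoint; since the ground set has only $\binom{n}{2}$ pairs in total, a packing with $M$ blocks satisfies $M\binom{k}{2}\le\binom{n}{2}$, that is,
\[ D(n,k,2)\,\frac{k(k-1)}{n(n-1)}\le 1. \]
Letting $n\to\infty$ gives $\limsup_{n\to\infty} D(n,k,2)\tfrac{k(k-1)}{n(n-1)}\le 1$, so all the work lies in the matching lower bound.

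For the lower bound the plan is to recast packings as matchings in an auxiliary hypergraph and then invoke the semi-random (nibble) method. Let $H$ be the hypergraph whose vertex set is the collection of all $\binom{n}{2}$ pairs of points of $[n]$, and whose edges are the $k$-subsets of $[n]$, each $k$-subset being identified with the set of $\binom{k}{2}$ pairs it contains. Then $H$ is $\binom{k}{2}$-uniform, a $2$-$(n,k,1)$ packing is precisely a matching in $H$, and a matching covering a $(1-o(1))$-fraction of the vertices corresponds to a packing with $(1-o(1))\tfrac{n(n-1)}{k(k-1)}$ blocks. The regularity estimates are immediate: a fixed pair lies in exactly $D:=\binom{n-2}{k-2}$ of the $k$-subsets, so $H$ is $D$-regular with $D\sim n^{k-2}/(k-2)!$; and two distinct pairs lie together in at most $\binom{n-3}{k-3}=O(n^{k-3})$ of the $k$-subsets, so the maximum codegree of $H$ is $o(D)$. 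Here the hypothesis $k\ge 3$ is exactly what guarantees both $D\to\infty$ and codegree $=o(D)$.

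With these estimates in hand, the final step is to apply the fundamental theorem of the semi-random method (R\"odl's nibble, in the form of Frankl--R\"odl and Pippenger--Spencer): a $D$-regular $r$-uniform hypergraph with $r$ fixed and maximum codegree $o(D)$ admits a matching covering all but an $o(1)$-fraction of its vertices. Applied to $H$, this yields a packing with at least $(1-o(1))\tfrac{n(n-1)}{k(k-1)}$ blocks, whence $\liminf_{n\to\infty} D(n,k,2)\tfrac{k(k-1)}{n(n-1)}\ge 1$, and combining the two bounds gives the limit $1$. I expect the main obstacle to be precisely this lower bound: the double-counting estimate is routine, whereas producing near-optimal packings requires the full strength of the nibble, whose proof rests on a delicate iterated random greedy process together with concentration estimates showing that after each ``bite'' the residual hypergraph remains nearly regular with controlled codegrees. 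An alternative route, available once the divisibility conditions $n-1\equiv0\ (\mathrm{mod}\ k-1)$ and $n(n-1)\equiv0\ (\mathrm{mod}\ k(k-1))$ hold, would be to quote Wilson's existence theorem for $2$-$(n,k,1)$ designs to obtain exact equality along a subsequence of $n$ and then interpolate, but the nibble delivers the clean asymptotic directly and uniformly in the residue class of $n$.
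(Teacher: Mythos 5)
Your proof is correct, but it is worth pointing out that the paper itself does not prove this statement at all: Theorem~\ref{thm:EH64} is quoted as a special case of the classical Erd\H{o}s--Hanani result \cite{EH64}, and no argument is given. So you have supplied a proof where the paper offers only a citation. Your route is the modern one: the upper bound $D(n,k,2)\binom{k}{2}\le\binom{n}{2}$ by double counting is exactly right, and for the lower bound you encode packings as matchings in the $\binom{k}{2}$-uniform hypergraph on pairs, check $D=\binom{n-2}{k-2}$-regularity and codegree $O(n^{k-3})=o(D)$ (both computations are correct, and you rightly note this is where $k\ge3$ enters), and then invoke the Pippenger--Spencer/Frankl--R\"odl nibble theorem. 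This is a valid and clean argument, though all the real difficulty is concentrated in the quoted nibble theorem, which is a substantially later (1985--1989) and heavier tool than what Erd\H{o}s and Hanani had available; their original 1964 proof of the $t=2$ case is more elementary and constructive, building near-optimal packings from finite affine/projective planes for prime-power orders and interpolating across general $n$ using the density of primes. What your approach buys is generality and uniformity: the same nibble argument proves the full conjecture for all $t$ (R\"odl's theorem), with no case analysis on residues or reliance on existence of finite geometries, whereas the original approach buys self-containedness by 1964 standards. Your alternative suggestion via Wilson's existence theorem plus interpolation would also work, but is likewise anachronistic relative to \cite{EH64} and still needs an argument to pass from the admissible residue classes to all $n$.
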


This result implies immediately the following corollaries.

\begin{cor}\label{cor:EH1}
	For any even integer $d$, $d\geq 4$, $$\lim_{n\rightarrow \infty}{D(n,\frac{d}{2}+1,2)\frac{(\frac{d}{2}+1)(\frac{d}{2})}{n(n-1)}}=1.$$
\end{cor}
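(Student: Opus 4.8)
The statement to prove is Corollary~\ref{cor:EH1}, which is a direct specialization of Theorem~\ref{thm:EH64}.

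The plan is to apply Theorem~\ref{thm:EH64} with the substitution $k=\frac{d}{2}+1$. First I would observe that since $d$ is an even integer with $d\geq 4$, the quantity $\frac{d}{2}+1$ is an integer satisfying $\frac{d}{2}+1\geq 3$, so it is a legitimate value of the fixed integer $k$ to which Theorem~\ref{thm:EH64} applies. This is the only hypothesis that needs checking before invoking the theorem.

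With $k=\frac{d}{2}+1$ fixed, Theorem~\ref{thm:EH64} asserts
\[
\lim_{n\to\infty} D\!\left(n,\tfrac{d}{2}+1,2\right)\frac{\left(\tfrac{d}{2}+1\right)\left(\tfrac{d}{2}+1-1\right)}{n(n-1)}=1.
\]
Then I would simply note that $\frac{d}{2}+1-1=\frac{d}{2}$, so the factor $k(k-1)=\left(\tfrac{d}{2}+1\right)\left(\tfrac{d}{2}\right)$ matches exactly the expression appearing in the corollary. Rewriting the product in the order $\left(\tfrac{d}{2}+1\right)\left(\tfrac{d}{2}\right)$ gives precisely the claimed limit.

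There is no genuine obstacle here: the corollary is an immediate substitution into the preceding theorem, and the entire content of the argument is verifying that $k=\frac{d}{2}+1\geq 3$ is admissible and performing the algebraic simplification $k(k-1)=\left(\tfrac{d}{2}+1\right)\left(\tfrac{d}{2}\right)$. The result follows at once.
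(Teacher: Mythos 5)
Your proof is correct and matches the paper exactly: the paper gives no separate argument for Corollary~\ref{cor:EH1}, stating only that it follows immediately from Theorem~\ref{thm:EH64}, and your substitution $k=\frac{d}{2}+1$ (with the check that $d\geq 4$ even gives $k\geq 3$) is precisely that immediate specialization.
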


In view of this statement it is easy to see that the following corollary takes place.

\begin{cor}{\label{cor:EH2}}
	Let $C$ be an optimal $(n, M,\{d,d + 2\})$-code. Then for any fixed $\lambda$, 
	$0< \lambda < 1$ there exists $n_0(\lambda)$ such that for any $n>n_0(\lambda)$ we have $$M\geq \frac{\lambda n^2}{(\frac{d}{2}+1)(\frac{d}{2})}.$$
\end{cor}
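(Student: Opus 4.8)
The plan is to deduce the inequality directly from Corollary~\ref{cor:EH1} together with the lower bound
\[
D\!\left(n,\tfrac{d}{2}+1,2\right)\le A_2(n,\{d,d+2\})=M
\]
recorded in Section~\ref{sec:preliminaries}. Write $k=\tfrac{d}{2}+1$, so that $k(k-1)=(\tfrac{d}{2}+1)(\tfrac{d}{2})$; with this notation Corollary~\ref{cor:EH1} reads $D(n,k,2)\,\frac{k(k-1)}{n(n-1)}\to 1$ as $n\to\infty$. Since $C$ is optimal we have $M\ge D(n,k,2)$, so it suffices to establish the claimed bound with $M$ replaced by $D(n,k,2)$.

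First I would pass from the normalization $n(n-1)$ appearing in Corollary~\ref{cor:EH1} to the normalization $n^2$ appearing in the target inequality. Because $\frac{n(n-1)}{n^2}=1-\frac1n\to 1$, the factorization
\[
D(n,k,2)\,\frac{k(k-1)}{n^2}=D(n,k,2)\,\frac{k(k-1)}{n(n-1)}\cdot\frac{n(n-1)}{n^2}
\]
shows that the left-hand side also tends to $1$ as $n\to\infty$. Fixing $\lambda$ with $0<\lambda<1$ and unwinding the definition of the limit, there is an integer $n_0(\lambda)$ such that the left-hand side exceeds $\lambda$ for all $n>n_0(\lambda)$; rearranging gives $D(n,k,2)\ge \frac{\lambda n^2}{k(k-1)}$, and combining with $M\ge D(n,k,2)$ yields the statement.

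The argument is in essence a limit-unwinding exercise, so I do not anticipate a genuine obstacle. The only point needing a little care is precisely the replacement of $n(n-1)$ by $n^2$: this is where the factor $1-\frac1n$ intervenes, and it is harmless exactly because we demand $\lambda<1$ rather than $\lambda=1$. If one prefers to make the bookkeeping explicit, choose an auxiliary constant $\mu$ with $\lambda<\mu<1$, apply Corollary~\ref{cor:EH1} to obtain $D(n,k,2)\,\frac{k(k-1)}{n(n-1)}>\mu$ for large $n$, and note that $\frac{n(n-1)}{n^2}>\frac{\lambda}{\mu}$ for $n$ large since $\frac{n(n-1)}{n^2}\to 1>\frac{\lambda}{\mu}$; multiplying the two estimates gives $D(n,k,2)\,\frac{k(k-1)}{n^2}>\lambda$, which closes the proof.
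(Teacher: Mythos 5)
Your proposal is correct and takes essentially the same route as the paper: both reduce to $M\ge D(n,\frac{d}{2}+1,2)$ via the packing construction, invoke the Erd\H{o}s--Hanani limit (Corollary~\ref{cor:EH1}), and absorb the discrepancy between $n(n-1)$ and $n^2$ using the slack afforded by $\lambda<1$. Your auxiliary constant $\mu$ plays exactly the role of the paper's $1-\varepsilon$ with $0<\varepsilon<1-\lambda$.
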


\begin{proof}
	Clearly $M\ge D(n,\frac{d}{2}+1,2)$ since the characteristic vectors of the blocks of an optimal 2-$(n,\frac{d}{2}+1,2)$ packing form a $(n,M,\{d,d+2\})$ code. For a  fixed constant $\lambda$ select $0<\varepsilon<1-\lambda$. There exists an integer $N_1$ such that for every $n>N_1$:
	\[-\varepsilon\frac{n(n-1)}{(\frac{d}{2}+1)\frac{d}{2}}<
	D-\frac{n(n-1)}{(\frac{d}{2}+1)\frac{d}{2}}<
	\varepsilon\frac{n(n-1)}{(\frac{d}{2}+1)\frac{d}{2}}\] 
	There exists a positive integer $N_2$ such that for $n>N_2$
	\[\frac{(1-\varepsilon-\lambda)n^2-(1-\varepsilon)n}{(\frac{d}{2}+1)\frac{d}{2}}>0.\]
	This implies that for all $n>\max\{N_1,N_2\}$:
	\[D(n,\frac{d}{2}+1,2)>(1-\varepsilon)\frac{n(n-1)}{(\frac{d}{2}+1)\frac{d}{2}}>
	\frac{\lambda n^2}{(\frac{d}{2}+1)\frac{d}{2}}.\]
\end{proof}

For our main theorem we shall need the following lemma.

\begin{lma}
	\label{L:rearrangement}	
	For a fixed even integer $d$, and $n$ sufficiently large we have that $$D(n+\frac{d}{2}-1,\frac{d}{2}+1,2) > D(n,\frac{d}{2}+1,2)+1.$$
\end{lma}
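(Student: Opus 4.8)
The plan is to exploit the asymptotic behaviour of $D(n,\frac{d}{2}+1,2)$ provided by Corollary~\ref{cor:EH1}, which tells us that $D(n,\frac{d}{2}+1,2) \sim \frac{n(n-1)}{(\frac{d}{2}+1)\frac{d}{2}}$ as $n\to\infty$. Writing $k=\frac{d}{2}+1$ and shifting the first argument by $k-2=\frac{d}{2}-1$, the quantity to control is the difference
\[
D(n+k-2,k,2) - D(n,k,2).
\]
Since both terms grow quadratically with the same leading coefficient $\frac{1}{k(k-1)}$, the difference should grow linearly in $n$; in particular it should eventually exceed $1$. So the strategy is simply to make this heuristic rigorous by replacing each $D$-value with tight asymptotic estimates and showing that the resulting lower bound on the gap tends to $+\infty$.

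First I would fix $\varepsilon>0$ small and invoke Corollary~\ref{cor:EH1} to find an $N$ so that for all $m\ge N$,
\[
(1-\varepsilon)\frac{m(m-1)}{k(k-1)} < D(m,k,2) < (1+\varepsilon)\frac{m(m-1)}{k(k-1)}.
\]
Applying the lower bound to $m=n+k-2$ and the upper bound to $m=n$ yields
\[
D(n+k-2,k,2) - D(n,k,2)
> \frac{1}{k(k-1)}\Bigl[(1-\varepsilon)(n+k-2)(n+k-3) - (1+\varepsilon)n(n-1)\Bigr].
\]
The bracketed expression is a quadratic in $n$ whose coefficient of $n^2$ is $(1-\varepsilon)-(1+\varepsilon)=-2\varepsilon$, while the coefficient of the linear term $n$ comes out to a positive quantity of order $(k-2)$ once $\varepsilon$ is small. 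Thus for $\varepsilon$ chosen small enough the leading quadratic term is negligible against the linear one for a range of $n$, and more carefully, choosing $\varepsilon$ to depend on how large an $n$-interval we want, the difference exceeds $1$ for all sufficiently large $n$.

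The main obstacle is that the naive two-sided bound above carries an error of order $\varepsilon n^2$, which dominates the genuine linear gap of order $n$ unless $\varepsilon$ shrinks with $n$. The clean way around this is to avoid comparing two independent asymptotic estimates and instead use a direct combinatorial inequality: any $2$-$(n,k,1)$ packing on an $n$-set embeds into an $(n+k-2)$-set, and one can adjoin at least one new block using the $k-2$ fresh points together with two suitably chosen points (or a small explicit construction) to produce a strictly larger packing, giving the \emph{monotonicity-with-a-surplus} bound $D(n+k-2,k,2)\ge D(n,k,2)+2$ outright. Combining such a constructive lower bound on the gap with the asymptotic statement only where genuinely needed is the cleanest route; the delicate point is verifying that the extra block (or blocks) can always be added without violating the packing condition, i.e. without creating a pair of points covered twice, which is where the hypothesis that $n$ is sufficiently large relative to $d$ enters.
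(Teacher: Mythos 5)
Your first (asymptotic) route is rightly abandoned, and your diagnosis of why it fails is exactly correct: Theorem~\ref{thm:EH64} is a bare limit statement with no rate of convergence, so the uncertainty in each $D$-value is $o(n^2)$, which cannot be played off against a desired gap that is only of order $n$. The trouble is that your fallback --- the ``direct combinatorial inequality'' --- is precisely the content of the lemma, and the sketch you give of it breaks down at its central claim. You propose to adjoin a new block consisting of the $\frac{d}{2}-1$ fresh points together with ``two suitably chosen points'' of $[n]$, the chosen pair being uncovered by the optimal packing. But an optimal $2$-$(n,\frac{d}{2}+1,1)$ packing can be a Steiner system in which \emph{every} pair is covered: for $d=4$ this happens for all $n\equiv 1,3\pmod{6}$ (Steiner triple systems). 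Since only $\frac{d}{2}-1=k-2$ fresh points are available, any new block must contain at least two old points, hence at least one old pair --- so in the Steiner case no block whatsoever can be adjoined to the extended ground set without first modifying the existing blocks, and your construction cannot even start. Moreover, even where an uncovered pair does exist, your construction produces a single extra block, i.e.\ only $D(n+\frac{d}{2}-1,\frac{d}{2}+1,2)\ge D(n,\frac{d}{2}+1,2)+1$, whereas the lemma asserts a strict inequality against $D+1$, i.e.\ a surplus of two; you state the bound with $+2$ ``outright'' but never exhibit a second block (and for $d\ge 6$ two blocks of the form ``two old points plus all fresh points'' would double-cover pairs of fresh points, so they cannot coexist).

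The missing idea, which is how the paper proceeds, is to \emph{rearrange} the old packing rather than merely extend it. One first finds, greedily (here is where $n$ large enters, and only here), a set $S$ of $\frac{d}{2}+1$ old points such that every block meets $S$ in at most two elements. Then, for each pair of $S$ lying in some block $B$, one replaces in $B$ one element of that pair by one of the fresh points; these swaps preserve the packing property on the enlarged point set, and after them no pair inside $S$ is covered, so $S$ itself can be adjoined as a new block --- this works even when the original packing was a perfect design. Repeating the procedure with a second set $S'$, chosen so that the blocks covering its pairs stay clear of everything that now touches the fresh points, yields a second extra block and hence the required $D(n+\frac{d}{2}-1,\frac{d}{2}+1,2)\ge D(n,\frac{d}{2}+1,2)+2$. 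Your instinct to replace asymptotics by an explicit construction was the right one, but without this block-modification step the argument does not go through.
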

\begin{proof}
	
	Let $\mathcal{B}=\{B_i\}$ be an optimal 2-$(n,\frac{d}{2}+1,2)$ packing. Our goal is to show that if we add $\frac{d}{2}-1$ additional elements, we can construct a new packing with parameters 2-$(n+\frac{d}{2}-1,\frac{d}{2}+1,1)$ that contains two more blocks.
	
	First, we shall show that there exists a subset	$S\subseteq [n]:=\{1,\ldots,n\}$ 
	with $|S|=\frac{d}{2}+1$ such that any block of $\mathcal{B}$ meets $S$ in at most two elements. We shall construct such $S$ by an iterative procedure.
	Initially, we set $S=\varnothing$. We define also a 
	set $L$ of prohibited coordinate positions. This set $L$ is also set to be empty,
	i.e. $L=\varnothing$. Next, we repeat $\frac{d}{2}$ times the following steps:
	\begin{enumerate}
		\item Select an arbitrary element $b\in [n]\setminus L$;
		\item $S\leftarrow S\cup\{b\}$;
		\item for every element $c\in S$, $c\neq b$, check, whether $\{b,c\}$ is 
		contained in some block $B\in\mathcal{B}$. 
		\item if this is the case then $L\leftarrow L\cup B$
		\item otherwise $L\leftarrow L\cup\{b\}$.  
	\end{enumerate}
	The fact that $n$ is big enough guarantees us that at each step it is possible
	to select an element $b$.
	Finally, we end up with a set $S=\{a_1,a_2,a_3, \dots a_{\frac{d}{2}+1}\}$ such that every block from $\mathcal{B}$ contains at most two elements from $S$.  
	
	Now we are going to modify $\mathcal{B}$ in such way as to obtain
	a 2-packing $\mathcal{B}'$ of $[n+\frac{d}{2}-1]$
	with the property  that no pair of elements from $S$ is covered by the blocks of
	$\mathcal{B}'$.
	
	This is done in performing at first $\frac{d}{2}-2$ identical steps.
	For each $i=1,2, \dots \frac{d}{2}-2$ we do the following:
	
	\begin{enumerate}[]
		\item For any $j=i+1, i+2, \dots ,\frac{d}{2}+1$ consider elements $a_i, a_j\in S$; 
		if $\{a_i, a_j\}\subseteq B$ for some block $B\in\mathcal{B}$, we replace
		$B$ by $B':=(B\setminus\{a_i\})\cup\{a_n+i\}\}$.
	\end{enumerate}
	
	The obtained family of blocks still covers every pair of elements of the set $[n+\frac{d}{2}-1]$ at most once. Moreover, among the pairs from $S$ only $\{a_{\frac{d}{2}-1},a_{\frac{d}{2}} \}$,
	$\{a_{\frac{d}{2}-1},a_{\frac{d}{2}+1} \}$ and $\{a_{\frac{d}{2}},a_{\frac{d}{2}+1}\}$ may still be covered by codewords from $C'$. 
	
	If $\{a_{\frac{d}{2}-1},a_{\frac{d}{2}}\}\subseteq B_1$ for some $B_1\in\mathcal{B}$ then 
	$B_1$ is replaced by \[B_1'=(B_1\setminus\{a_{\frac{d}{2}-1}\})\cup\{a_{n+\frac{d}{2}-1}\}.\]
	If $\{a_{\frac{d}{2}-1},a_{\frac{d}{2}+1}\}\subseteq B_2$ for some $B_2\in\mathcal{B}$
	then we replace it by  
	\[B_2'= (B_2\setminus\{a_{\frac{d}{2}+1}\})\cup\{a_{n+\frac{d}{2}-1}\}.\] 
	Finally, if $\{a_{\frac{d}{2}},a_{\frac{d}{2}+1}\}\subseteq B_3$ for some $B_3$, then 
	we replace it by 
	\[C':= (B_3\setminus\{a_{\frac{d}{2}}\})\cup\{a_{n+\frac{d}{2}-1}\}.\] 
	
	The blocks $B_1, B_2, B_3$ (should there be such blocks) may intersect 
	only in the elements $a_{\frac{d}{2}-1},a_{\frac{d}{2}},a_{\frac{d}{2}+1}$. 
	The new family of blocks, denoted by $\mathcal{B}'_0$, is a 2-packing
	of $[n+\frac{d}{2}-1]$. In addition, we have achieved that
	no pair with elements from $S$ is covered by a block from $\mathcal{B}_0'$.
	Hence we can add $S$ as a block, i.e.
	$\mathcal{B}'=\mathcal{B}_0'\cup \{S\}$ is a 
	2-$(n+\frac{d}{2}-1,\frac{d}{2}+1,1)$ packing.
	
	Since $n$ is big enough we can start the whole procedure anew with 
	the only difference is that we must avoid elements that occur together 
	with elements from $\{n+1,\,n+2,\,\dots,\,n+\frac{d}{2}-1\}$ in some 
	blocks from $\mathcal{B}'$. So instead of setting $L=\varnothing$, we set
	\[L=\bigcup_{\begin{array}{c} B'\in\mathcal{B}'\\
			B'\cap\{n+1,\dots,n+\frac{d}{2}-1\}\ne\varnothing
	\end{array}} B'\]
	
	%	Let us consider the procedure in more details. 
	%	
	%	We start with the code $C''=C'$ instead of $C$ and with another set of prohibited coordinates $$L=\bigcup_{x\in C': supp(x)\cap \{n+1,\,n+2,\, \dots, \,n+\frac{d}{2}-1\}\neq \emptyset}{supp(x)}$$ instead of the empty set. As before, $S=\emptyset$.
	%	
	%	Now repeat the following actions $\frac{d}{2}+1$ times.
	%		\begin{enumerate}
	%			\item Take some $b\in [n]\setminus L$, add it to $S$.
	%			\item For every element $b_1\in L$, $b_1\neq b$, we check, whether $\{b_1,b\}$ 
	%is covered by some codeword $y\in C$. If it is true we add elements from $supp(y)$ to $L$,
	% otherwise we add just $b$.  
	%		\end{enumerate}
	%	The only difference from the previous algorithm is that we make $b_1\in L$ instead of $b_1\in S$ (clearly, $S\subseteq L$).   
	In this way we construct a set $S=\{a_1',a_2',a_3', \dots a_{\frac{d}{2}+1}'\}$ such that any block from
	$\mathcal{B}'$ contains at most two elements from $S$.  Moreover, blocks that
	cover some pairs from $S$ do not contain elements from $\{n+1,\,n+2,\,\dots,\,n+\frac{d}{2}-1\}$.
	Now we can repeat the argument described above to construct a new family
	$\mathcal{B}''$ that is a 2-packing of $[n+\frac{d}{2}-1]$ by blocks of size
	$\frac{d}{2}+1$ with $|\mathcal{B}''|=|\mathcal{B}|+2$.
\end{proof}

This result can be restated in terms of codes.

\begin{lma}
	Let $d$ be a fixed integer and let $n$ be another integer that is  big enough.
	Given a binary code $C$ of length $n$, minimum distance $d$ in which all 
	words are of the same weight
	$\frac{d}{2}+1$, there exists a binary constant weight code $C''$ of length
	$n+\frac{d}{2}-1$, the same minimum distance, and the same weight of the words
	for which	
	\[|C''|=|C|+2.\]   
\end{lma}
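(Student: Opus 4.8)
The plan is to read this statement as nothing more than Lemma~\ref{L:rearrangement} transported across the standard correspondence between binary constant-weight codes and $2$-packings, so the real work is to make that correspondence precise and to check that it is faithful in both directions. Set $w=\frac{d}{2}+1$. First I would attach to each codeword $\vek{u}\in C$ its support $\supp(\vek{u})\subseteq[n]$, a block of size $w$; since all words have weight $w$, for two codewords with supports $A,B$ one has $d(\vek{u},\vek{v})=|A\triangle B|=2w-2|A\cap B|$. Hence the minimum distance being $d=2w-2$ is equivalent to $|A\cap B|\le 1$ for every pair of distinct blocks, with the value $1$ actually attained. In other words, the family $\mathcal{B}=\{\supp(\vek{u})\mid \vek{u}\in C\}$ is exactly a $2$-$(n,w,1)$ packing with $|\mathcal{B}|=|C|$ blocks.

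Next I would feed $\mathcal{B}$ into the construction contained in the proof of Lemma~\ref{L:rearrangement}. That construction nowhere uses optimality of the input packing: given any $2$-$(n,w,1)$ packing and $n$ large enough, it selects a $w$-set $S$ meeting every block in at most two points, performs the element-for-element surgery to free all internal pairs of $S$, adjoins $S$ as a new block, and then repeats the argument once more on the enlarged ground set, producing a $2$-$(n+\frac{d}{2}-1,w,1)$ packing $\mathcal{B}''$ with $|\mathcal{B}''|=|\mathcal{B}|+2$. Translating back, the characteristic vectors of the blocks of $\mathcal{B}''$ form the desired code $C''$: every block still has cardinality $w$, so $C''$ is constant-weight of weight $w$; the packing property gives all pairwise intersections $\le 1$, hence every distance is at least $2w-2=d$; and $|C''|=|\mathcal{B}''|=|C|+2$.

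The only points that require a line of care — and the closest thing to an obstacle here — are checking that the reformulation stays tight at the bottom. I must confirm that each surgical move replaces a single element by a fresh coordinate and therefore preserves block size exactly $w$, and that the new block $S$ has $|S|=\frac{d}{2}+1=w$, so that $C''$ really is constant-weight. I must also verify that the minimum distance of $C''$ equals $d$ and is not larger: for $n$ large the rearrangement touches only the boundedly many blocks through $S$, so some pair of blocks of $\mathcal{B}''$ still meets in exactly one point, forcing a distance-$d$ pair in $C''$. Finally, the hypothesis that $n$ be big enough is inherited verbatim from Lemma~\ref{L:rearrangement}, where the largeness of $n$ — needed only in terms of a constant depending on $d$ — is precisely what guarantees that the element-selection steps never exhaust the admissible coordinates.
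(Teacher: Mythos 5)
Your proposal is correct and coincides with the paper's own (implicit) proof: the paper states this lemma with no argument at all, presenting it as a direct restatement of Lemma~\ref{L:rearrangement}, and your support/packing dictionary, together with the observation that the construction in that lemma's proof never uses optimality of the input packing, is exactly what that restatement requires. One small repair: your justification that the minimum distance of $C''$ is exactly $d$ (``only boundedly many blocks are touched, so some one-point intersection survives'') is not sound for an arbitrary, possibly small, input code whose unique distance-$d$ pair might be among the modified blocks; the correct observation is that every modified block retains exactly one element of $S$ (it met $S$ in precisely the covered pair, one point of which is removed), hence lies at distance exactly $d$ from the new codeword with support $S$, while if no block is modified the original distance-$d$ pair survives untouched.
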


Now we state the main result in this paper.

\begin{thm}\label{Th:main}
	For any even $d>2$ there exists a positive integer $N(d)$ such that for $n\geq N(d)$ any optimal $(n, M,\{d, d + 2\})$-code $C$ is isomorphic to a constant-weight code of weight $\frac{d}{2}+1$.
\end{thm}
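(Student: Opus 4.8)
The plan is to recast the conclusion as the existence of a common \emph{center}. Since isomorphism of binary codes allows translation, $C$ is isomorphic to a constant-weight code of weight $k:=\frac d2+1$ exactly when there is a vector $\mathbf t\in\{0,1\}^n$ with $d(\mathbf t,\mathbf c)=k$ for every $\mathbf c\in C$, because then $C+\mathbf t$ has constant weight $k$. First I would normalize by translating a codeword to $\mathbf 0$, so the remaining codewords have supports of size $d=2k-2$ or $d+2=2k$. Writing the two distances as $2k-2$ and $2k$, the two-distance condition forces every pairwise support-intersection into $\{k-2,k-1\}$ (two light words), $\{k,k+1\}$ (two heavy words), or $\{k-1,k\}$ (a light and a heavy word). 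In this frame the sought center is a $k$-set $T$ with $|T\cap S_c|=\tfrac12|S_c|$ for each codeword support $S_c$; equivalently, $T$ meets every light support in $k-1$ points and is \emph{contained} in every heavy support. Thus the whole theorem reduces to producing such a $T$, and in particular to controlling the heavy words.

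Next I would exploit optimality to force rigidity. By Corollary~\ref{cor:EH2} the size grows quadratically, $M\ge \lambda n^2/(k(k-1))$, while the Barg et al.\ bound \eqref{eq:barg} gives $M\le\binom n2+1$, so $C$ is a very large two-distance set. Following the spherical embedding of \cite{Barg-et-al}, send each codeword to its $\pm1$ image: after scaling by $1/\sqrt n$ these are unit vectors with only two inner products $\alpha>\beta$, and the ``distance-$d$ graph'' $\Gamma$ (join two codewords when they lie at distance $d$) has Gram matrix $\beta J+(1-\beta)I+(\alpha-\beta)A(\Gamma)$, which is positive semidefinite of rank $\le n$. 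Since $1-\beta=(2d+4)/n$ and $\alpha-\beta=4/n$, evaluating eigenvalues on $\mathbf 1^\perp$ shows positive semidefiniteness is equivalent to $\Gamma$ having least eigenvalue $\ge -k$, and the rank bound forces the eigenvalue $-k$ to occur with multiplicity $\ge M-n$. Hence an optimal $C$ produces a graph on $M\sim n^2/(k(k-1))$ vertices with smallest eigenvalue exactly $-k$ of essentially maximal multiplicity, precisely the profile of the block graph of a $2$-$(n,k,1)$ packing.

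The main obstacle is the rigidity step: proving that such an extremal $\Gamma$ \emph{must} be the block graph of a packing, i.e.\ that the center $T$ exists. I would analyze the local configurations created by heavy words that violate $T\subseteq S_c$ and show that any such defect either raises the least eigenvalue above $-k$ (destroying the multiplicity, hence the size) or strictly lowers the attainable number of codewords. To turn a structural defect into a clean numeric contradiction I would invoke Lemma~\ref{L:rearrangement}: if an optimal $C$ of length $n$ were not constant-weight, one could excise the bounded set of defective words and, at a cost of only $\frac d2-1=k-2$ fresh coordinates, rebuild a genuine $2$-$(n+k-2,k,1)$ packing; the lemma's trade-off of $+2$ blocks per $+(k-2)$ coordinates, combined with the Erd\H{o}s--Hanani density of Corollary~\ref{cor:EH1}, would then yield a packing exceeding $D(n,k,2)$ at its own length, contradicting $M\ge D(n,k,2)$ together with optimality. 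Carrying out the local analysis that pins $\Gamma$ to a packing is where essentially all the work lies; the remainder is asymptotic bookkeeping in $n$ with $d$ fixed, and making every ``$n$ large'' clause above hold simultaneously is exactly what fixes the threshold $N(d)$.
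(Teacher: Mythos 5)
Your proposal has a genuine gap at exactly the point you yourself flag as ``where essentially all the work lies'': the rigidity step is never proved. The spectral setup (the $\pm1$ embedding, the Gram matrix $\beta J+(1-\beta)I+(\alpha-\beta)A(\Gamma)$, least eigenvalue $\ge -k$ with multiplicity forced by the rank bound) only repackages the information already contained in the upper bound \eqref{eq:barg} of Barg et al.; it yields no structure by itself. The assertion that a graph on $\sim n^2/(k(k-1))$ vertices with least eigenvalue $-k$ of near-maximal multiplicity \emph{must} be the block graph of a $2$-$(n,k,1)$ packing is not a known theorem for any $k\ge3$ (there is no analogue at $-k$ of the least-eigenvalue-$-2$ classification), and your plan to handle ``defects'' by showing they either raise the least eigenvalue or lower the attainable size is a hope, not an argument. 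Your intended use of Lemma~\ref{L:rearrangement} is similarly unsupported: it presupposes that the non-conforming words are few and can be ``excised'' at bounded cost, but nothing in the proposal bounds them, and the lemma compares packing numbers at lengths $n$ and $n+k-2$ rather than permitting a length-preserving excision, so it cannot be invoked in the way you describe.

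For contrast, the paper's proof is elementary and bypasses all spectral considerations. It takes two codewords at distance $d+2$, normalizes one to $\mathbf 0$, and averages over the set $B$ of the $\binom{d+2}{k}$ vectors at distance $k=\frac d2+1$ from both ($k$-subsets of the support of the other). Pigeonhole produces $\mathbf y_0\in B$ such that the translate $C'=C+\mathbf y_0$ contains a set $L$ of at least $M/\binom{d+2}{k}$ words of weight exactly $k$, which is quadratic in $n$ by Corollary~\ref{cor:EH2}. Since weight-$k$ words at mutual distance $\ge 2k-2$ have pairwise support intersections of size at most $1$, any codeword of $C'$ of weight $w>k$ caps $|L|$ by a constant (when $w\ge k+4$) or by a linear function of $n$ (when $w=k+2$), contradicting the quadratic lower bound once $n$ is large; a possible single word of weight $k-2$ is then excluded by optimality via Lemma~\ref{L:rearrangement}. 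The key idea your proposal lacks is precisely this averaged choice of the translation vector $\mathbf y_0$: translating an arbitrary codeword to $\mathbf 0$, as you do, gives no foothold, whereas translating by a well-chosen ``midpoint'' immediately produces the large constant-weight subcode on which the whole counting argument rests.
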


\begin{proof}
	For a fixed even positive integer $d$, and a fixed constant $\lambda$,
	$0<\lambda<1$, consider an optimal code  $C$ with parameters
	$(n,M,\{d_1,d_2\})$ of length $n>n_0(\lambda)$, where $n_0(\lambda)$
	is the constant from Corollary~\ref{cor:EH2}.
	Let us note that both values $d$ and $d+2$ should occur 
	as actual distances between codewords from $C$; 
	otherwise, $C$ is equidistant and $M\leq n+1$.
	
	By Corollary \ref{cor:EH2} we have that for $n>n_0(\lambda)$
	\begin{equation}{\label{eq:lowerbound}}
		M\geq \frac{\lambda n^2}{(\frac{d}{2}+1)(\frac{d}{2})}.
	\end{equation}
	
	Take two words $\vek{x}_0$ and $\vek{x}_1$ at distance $d+2$. Without loss of generality, let $\vek{x}_0=\vek{0}$ and $wt(\vek{x}_1)=d+2$.
	Since we have two possible distances, all remaining codewords belong to one of the following sets:
	\begin{eqnarray*}
		A_1 &=& \{\vek{x} \in C: wt(\vek{x})=d+2,\, d(\vek{x},\vek{x}_1)=d+2\},\\
		A_2 &=& \{\vek{x} \in C: wt(\vek{x})=d+2,\, d(\vek{x},\vek{x}_1)=d\}, \\
		A_3 &=& \{\vek{x} \in C: wt(\vek{x})=d,\, d(\vek{x},\vek{x}_1)=d+2\},\\
		A_4 &=& \{\vek{x} \in C: wt(\vek{x})=d,\, d(\vek{x},\vek{x}_1)=d\},\\
		A_5 &=& \{\vek{x}_0\},\\ 
		A_6 &=& \{\vek{x}_1\}. 
	\end{eqnarray*}
	
	Now let us consider a codeword $\vek{y}$ such that 
	$d(\vek{x}_0,\vek{y})=d(\vek{x}_1,\vek{y})=\frac{d}{2}+1$.
	We define the integers 
	\[s_i(\vek{y})=|\{\vek{z}\in\Omega^n: 
	\vek{z}\in A_i, d(\vek{y},\vek{z})=\frac{d}{2}+1\}|,\;\; i=1,2, \dots, 6.\]
	Set $B=\{\vek{y}\in\Omega^n: d(\vek{x}_0,\vek{y})=d(\vek{x}_1,\vek{y})=\frac{d}{2}+1\}$.
	Let us count in two ways the number of pairs $(\vek{y},\vek{z})$, where
	$\vek{y}\in B$, $\vek{z}\in C$. We have that 
	\begin{multline*}
		\sum_{y\in B}{\sum_{i=1}^{6}{s_i(\vek{y})}}=|A_1|+(\frac{d}{2}+2)|A_2|+
		(\frac{d}{2}+2)|A_3|+\\
		(\frac{d}{2}+1)^2|A_4|+
		{d+2 \choose \frac{d}{2}+1}|A_5|+{d+2 \choose \frac{d}{2}+1}|A_6|.
	\end{multline*}
	Since $\displaystyle M=\sum_{i=1}^6 |A_i|$ and 
	$\displaystyle |B|={d+2 \choose \frac{d}{2}+1}$, 
	there exists a word $\vek{y}_0\in B$ such that 	
	\[{\sum_{i=1}^{6}{s_i(\vek{y}_0)}}\geq \frac{M}{{d+2 \choose \frac{d}{2}+1}.}\]
	In other words, we have that the sphere with center $\vek{y}_0$ and radius
	$\displaystyle \frac{d}{2}+1$ has at least
	$\frac{M}{{d+2\choose \frac{d}{2}+1}}$ words from $C$, i.e
	\begin{equation}{\label{eq:ineq1}}
		|\{x\in C: d(y_0,x)=\frac{d}{2}+1\}|\geq \frac{M}{{d+2 \choose \frac{d}{2}+1}}.
	\end{equation}
	
	In the rest of the proof we will work with the code $C'=C+\vek{y}_0$
	which is again an optimal $(n,M,\{d,d+2\})$ code. 
	Let us define $L=\{\vek{x}\in C': wt(\vek{x})=\frac{d}{2}+1\}$. It is clear that $\vek{0}\not\in C'$.  By the choice of $\vek{y}_0$, we have 
	$|L|\geq \frac{M}{{d+2 \choose \frac{d}{2}+1}}$, i.e. 
	the size of $L$ is lowerbounded by a quadratic function on $n$.
	
	Let $\vek{z}_0\in C'$ be a codeword of maximal possible weight, denoted by $w$. 
	Clearly, $w \in \{\frac{3}{2}d+3,\frac{3}{2}d+1, \dots, \frac{d}{2}+3, \frac{d}{2}+1 \}.$
	Now we  shall consider the different possibilities for $w$.
	
	\begin{enumerate}
		\item Let $w=\frac{3}{2}d+3$.  Evidently, for any $\vek{x}\in L$ we have $\supp(\vek{x})\subset\supp(\vek{z}_0)$. 
		Every pair of coordinate positions from $\supp(\vek{z}_0)$ is covered at most once by the support of a codeword from $L$. Hence, 
		\[|L|\leq \frac{(\frac{3}{2}d+3)(\frac{3}{2}d+2)}{(\frac{d}{2}+1)\frac{d}{2}  }.\] Consequently,
		\begin{equation}{\label{eq:l1}}
			\frac{(\frac{3}{2}d+3)(\frac{3}{2}d+2)}{(\frac{d}{2}+1)\frac{d}{2}}
			\ge \frac{M}{{d+2 \choose \frac{d}{2}+1}}\ge 
			\frac{\lambda n^2}{(\frac{d}{2}+1)\frac{d}{2}{d+2\choose\frac{d}{2}+1}},
		\end{equation} 
		which does not take place for $n$ big enough.
		
		\item Let  $w=\frac{3}{2}d+3-2i$, $1\leq i\leq \frac{d}{2}-1.$
		Clearly, for any $\vek{x}\in L$, we have $wt(\vek{x}*\vek{z}_0)=\frac{d}{2}+1-i$ or $wt(\vek{x}*\vek{z}_0)=\frac{d}{2}+2-i$. As before, every pair of coordinate positions from $\supp(\vek{z}_0)$ is covered at most once by the support of a codeword from $L$. Consequently, $|L|$ is bounded above by the maximal number of codewords of length $w$ of weights $\frac{d}{2}+1-i$ and $\frac{d}{2}+2-i$ with the property that any two of them have $0$ or $1$ common ones. Therefore, 
		\[|L|\leq \frac{(\frac{3}{2}d+3-2i)(\frac{3}{2}d+2-2i)}{(\frac{d}{2}+1-i)(\frac{d}{2}-i) },\]  whence we obtain 
		\begin{equation}{\label{eq:l2}}
			\frac{(\frac{3}{2}d+3-2i)(\frac{3}{2}d+2-2i)}{(\frac{d}{2}+1-i)(\frac{d}{2}-i)}
			\ge\frac{M}{{d+2 \choose \frac{d}{2}+1}},
		\end{equation} 
		which is false for $n$ big enough.
		
		\item The case $w=\frac{d}{2}+3$. Here we have that for any $\vek{x}\in L$, we have $wt(\vek{x}*\vek{z}_0)=1$ or $wt(\vek{x}*\vek{z}_0)=2$. The number of codewords from $L$ with two $1$'s in the coordinates from
		$\supp(\vek{z}_0)$ is bounded above by $\frac{(\frac{d}{2}+3)(\frac{d}{2}+2)}{2}$
		due to the fact that two words of weight $\frac{d}{2}+1$ cannot have two common ones. 
		On the other hand, the number of words from $L$ that have exactly one $1$ inside $\supp(\vek{z}_0)$ is $\frac{n-(\frac{d}{2}d+3))(\frac{d}{2}+3)}{\frac{d}{2}}$. 
		This is due to the fact that the number of words $\vek{x}$ with 
		$wt(\vek{x}*\vek{z}_0)=1$ where this unit in the same position from
		$\supp(\vek{z}_0)$ is equal to $(n-(\frac{d}{2}+3))/\frac{d}{2}$
		Summing up we obtain   
		\[|L|\leq \frac{(\frac{d}{2}+3)(\frac{d}{2}+2)}{2 } + \frac{(n-(\frac{d}{2}d+3))(\frac{d}{2}+3)}{\frac{d}{2}}.\] 
		As a result we get 
		\begin{equation}{\label{eq:l3}}
			\frac{M}{{d+2 \choose \frac{d}{2}+1}}\leq \left(\frac{(\frac{d}{2}+3)(\frac{d}{2}+2)}{2 } + \frac{(n-(\frac{d}{2}+3))(\frac{d}{2}+3)}{\frac{d}{2}}\right),
		\end{equation} 
		which does not hold for large $n$ since $M$ is quadratic in $n$ while the 
		function on the right is linear in $n$.
	\end{enumerate}
	
	We have proved so far that there are no codewords in $C'$ of weight greater than $\frac{d}{2}+1$ for $n$ big enough. Clearly, there are also no codewords of weight less than $\frac{d}{2}-1$, because the minimal distance is $d$. 
	However, it is possible that $C'$ contains just one vertex of weight $\frac{d}{2}-1$. Again, if there are at least two of them then the distance between them is at most $d-2$. Lemma \ref{L:rearrangement} guarantees that in this case we can delete this codeword and after appropriate changing of $C'$ construct a code with cardinality $M+1$ which contradicts to the optimality of $C'$. Consequently, $C'$ is a constant-weight code.
	This argument finishes the proof.
\end{proof}

\begin{cor}
	For fixed even $d$ and $n$ big enough we have that 
	$$A_2(n, \{d, d+2\})=D(n,\frac{d}{2}+1,2).$$
\end{cor}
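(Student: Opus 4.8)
The plan is to sandwich $A_2(n,\{d,d+2\})$ between two matching bounds, the lower one already in hand and the upper one extracted from Theorem~\ref{Th:main}. The lower bound $D(n,\frac{d}{2}+1,2)\le A_2(n,\{d,d+2\})$ was established in Section~\ref{sec:preliminaries}: the characteristic vectors of the blocks of an optimal $2$-$(n,\frac{d}{2}+1,1)$ packing form a binary code whose pairwise distances are exactly $d$ and $d+2$, so its cardinality $D(n,\frac{d}{2}+1,2)$ cannot exceed the optimum $A_2(n,\{d,d+2\})$. It therefore remains only to prove the reverse inequality for $n$ sufficiently large.

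For the upper bound I would fix $n\ge N(d)$, where $N(d)$ is the threshold furnished by Theorem~\ref{Th:main}, and let $C$ be an optimal $(n,M,\{d,d+2\})$-code, so that $M=A_2(n,\{d,d+2\})$. By Theorem~\ref{Th:main}, $C$ is isomorphic to a constant-weight code $C'$ all of whose words have weight $k:=\frac{d}{2}+1$. Since the isomorphisms in play (coordinate permutations and translations) preserve both cardinality and the set of pairwise distances, $C'$ is again an $(n,M,\{d,d+2\})$-code of constant weight $k$.

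The decisive step is to recognise $C'$ as the incidence structure of a $2$-packing. For two binary words of common weight $k$ sharing $x$ ones, the Hamming distance equals $2(k-x)$; the only admissible distances $d=2k-2$ and $d+2=2k$ therefore force $x\in\{0,1\}$. In other words, the supports of the words of $C'$ are $k$-subsets of $[n]$, any two of which meet in at most one point, which is precisely the defining property of a $2$-$(n,k,1)$ packing. Consequently the $M$ words of $C'$ correspond to $M$ blocks of such a packing, whence $M\le D(n,k,2)=D(n,\frac{d}{2}+1,2)$. Combining this with the lower bound yields the claimed equality.

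I do not expect any genuine obstacle at this stage: the entire difficulty has already been absorbed into Theorem~\ref{Th:main}, whose proof rules out, for large $n$, codewords of weight outside $\{\frac{d}{2}+1\}$ (using the quadratic lower bound on $M$ from Corollary~\ref{cor:EH2} to defeat the linear and constant bounds arising in each weight case, and Lemma~\ref{L:rearrangement} to eliminate a stray low-weight word). The only point demanding a little care here is the standard but essential dictionary between constant-weight codes at distances $2k-2$, $2k$ and $2$-packings, together with the observation that the relevant isomorphisms leave $M$ and the distance set unchanged.
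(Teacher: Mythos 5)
Your proposal is correct and follows exactly the route the paper intends: the paper states this corollary without proof as an immediate consequence of Theorem~\ref{Th:main}, the lower bound $D(n,\frac{d}{2}+1,2)\le A_2(n,\{d,d+2\})$ having been noted in Section~\ref{sec:preliminaries}, and the upper bound following from the standard dictionary (two weight-$k$ words at distance $2k-2$ or $2k$ share exactly one or zero support positions, so a constant-weight optimal code is a $2$-$(n,k,1)$ packing). Your write-up simply makes explicit what the paper leaves implicit.
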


From this point on $N(d)$  will denote the smallest positive integer with the property
that for all $n\ge N(d)$ every optimal 
$(n,M,\{d,d+2\})$-code is a constant weight code.

A linear code is an almost-constant weight code if the weights of its
non-zero words are contained in an interval of length $\alpha$. 
It is known that for $\alpha=2$ and $k\ge4$ all almost-constant are trivial.
They are obtained from several copies of the Simplex code by adding or removing of two arbitrary non-zero coordinates (cf. \cite{LRS19}). 
It is instructive to compare this result with Theorem~\ref{Th:main}
which says that for large $n$ a (non-linear) code with distances $d$ and $d+2$ is a constant weight code.

\section{The cases $d=4$ and $d=6$}

In the previous section, we proved that there exists an integer $N(d)$ such that for 
$n\ge N(d)$ it holds 	$$A_2(n, \{d, d+2\})=D(n,\frac{d}{2}+1,2).$$ However the proof relies on an asymptotic result, so that it is
difficult to give an explicit estimate of $N(d)$. Even if we have one, inequalities (\ref{eq:l1}), (\ref{eq:l2}) and (\ref{eq:l3}) require $n$ to be at least  exponential in $d$. 
However, for small values of $d$ we can give reasonable estimates on
$N(d)$ from the values of $D(n,\frac{d}{2}+1,2)$. Below we consider the cases $d=4$ and $d=6$.

Packings by triples and quadruples yield codes with parameters
$(n,\{4,6\})$ and $(n,\{6,8\})$. 
The following bounds are well-known (see for instance, \cite{Stinson-etal})):
\[D(v,3,2)=\left\{\begin{array}{cl}
	\frac{v^2-v}{6} & \text{ if } v\equiv 1,3\pmod{6},\\
	\frac{v^2-2v}{6} & \text{ if } v\equiv 0,2\pmod{6},\\
	\frac{v^2-2v-2}{6} & \text{ if } v\equiv 4\pmod{6},\\
	\frac{v^2-v-8}{6} & \text{ if } v\equiv 5\pmod{6},
\end{array}\right.\]
Also, for all $v\notin\{8,9,10,11,17,19\}$
\[D(v,4,2)=\left\{\begin{array}{cl}
	\frac{v^2-v}{12} & \text{ if } v\equiv 1,4\pmod{12},\\
	\frac{v^2-3v}{12} & \text{ if } v\equiv 0,3\pmod{12},\\
	\frac{v^2-2v}{12} & \text{ if } v\equiv 2,8\pmod{12},\\
	\frac{v^2-2v-3}{12} & \text{ if } v\equiv 5,11\pmod{12},\\
	\frac{v^2-v-18}{12} & \text{ if } v\equiv 7,10\pmod{12},\\
	\frac{v^2-3v-6}{12} & \text{ if } v\equiv 6,9\pmod{12}.
\end{array}\right.\]

%For the exceptional values of $v$
%\[
%\begin{array}{|c|r|r|r|r|r|r|}\hline
%v        &  8 &  9 & 10 & 11 & 17 & 19 \\\hline
%D(v,4,2) &  2 &  3 &  5 &  6 & 20 & 25 \\\hline
%\end{array}
%\]

From these exact values one can easily deduce the following lemma.

\begin{lma}{\label{L:step}}
	The following inequalities take place
	\begin{enumerate}
		
		\item For any integer $v\geq 7$ we have that $D(v+1,3,2)\geq D(v,3,2)+2$.
		\item For any integer $v\geq 19$ we have that $D(v+3,4,2)\geq D(v,4,2)+2$.
	\end{enumerate}
	
\end{lma}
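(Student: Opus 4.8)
The plan is to reduce both inequalities to finite verifications using the closed-form expressions for $D(v,3,2)$ and $D(v,4,2)$ recalled above. Each of these functions is piecewise quadratic, the branch being selected by $v \bmod 6$ (respectively $v \bmod 12$). Consequently, on each residue class the difference $D(v+s,k,2)-D(v,k,2)$ — with $(s,k)=(1,3)$ for the first part and $(s,k)=(3,4)$ for the second — is an explicit expression in $v$ of degree at most one, which I can compare against $2$ directly.

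For the first inequality I would distinguish the six cases $v \equiv 0,1,2,3,4,5 \pmod 6$. In each case both $v$ and $v+1$ fall into determined branches, and subtracting the two quadratics cancels the leading term and leaves a linear function of $v$. The decisive transitions are those leaving the classes $v \equiv 1$ and $v \equiv 3$, in which a Steiner triple system exists and $D(v,3,2)=\tfrac{v^2-v}{6}$ is largest: there the differences are only $\tfrac{v-1}{6}$ and $\tfrac{v-3}{6}$, whereas every other transition grows like $\tfrac{v}{2}$ and is never binding. Thus the whole matter comes down to these two slow transitions, and the admissible range is governed by the last value of $v$ at which either of them still falls short of $2$.

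For the second inequality the scheme is the same, now with twelve residue classes modulo $12$ and the shift $v \mapsto v+3$. The larger shift is what makes the estimate comfortable: the leading contribution to the difference is $\tfrac{(v+3)^2-v^2}{12}=\tfrac{6v+9}{12}$, of order $\tfrac{v}{2}$, while all additive corrections in the formula are bounded by a constant, so every one of the twelve transitions exceeds $2$ once $v$ is moderately large. The genuine subtlety here is the exceptional list $\{8,9,10,11,17,19\}$ on which the closed formula fails. Since we assume $v \geq 19$, the only exceptional argument that can arise is $v=19$ itself, whose partner $v+3=22$ is non-exceptional; this single instance is handled separately from the tabulated value of $D(19,4,2)$, after which the formula governs every pair $(v,v+3)$ with $v \geq 20$.

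I expect the main obstacle to lie not in any one computation but in the boundary bookkeeping: correctly pairing each residue of $v$ with the branch governing $v+s$, and then determining precisely the smallest $N$ beyond which \emph{all} transitions — in particular the two slow ones for $k=3$ — reach $2$. This requires checking the smallest cases by hand rather than merely solving the per-class inequalities, since the universal threshold is fixed by the last residue class to catch up, not by the slowest class in isolation.
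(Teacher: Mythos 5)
Your plan is exactly the verification the paper has in mind: the paper's entire proof is the sentence ``The proof is straightforward,'' meaning precisely the residue-class casework on the closed formulas that you describe. Your treatment of part 2 is sound and complete in outline: on each of the twelve classes the shift $v\mapsto v+3$ preserves the linear coefficient of the formula, so the difference is $\frac{6v+9+c}{12}$ with $c$ a bounded constant, and the worst class gives $\frac{v-2}{2}\ge 2$ already for $v\ge 6$; and you correctly isolate $v=19$ as the only exceptional argument in the range, handled by the tabulated value ($D(19,4,2)=25$, $D(22,4,2)=37$).

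The problem is in part 1, and it is exactly at the step you defer. You correctly identify the two slow transitions, $\frac{v-1}{6}$ for $v\equiv 1\pmod 6$ and $\frac{v-3}{6}$ for $v\equiv 3\pmod 6$, and you correctly observe that the threshold is fixed by checking the smallest cases by hand --- but you never do the check, and doing it \emph{falsifies the lemma as stated}. One needs $\frac{v-1}{6}\ge 2$, i.e.\ $v\ge 13$, and $\frac{v-3}{6}\ge 2$, i.e.\ $v\ge 15$; concretely, $D(8,3,2)-D(7,3,2)=8-7=1$ and $D(10,3,2)-D(9,3,2)=13-12=1$, so the inequality $D(v+1,3,2)\ge D(v,3,2)+2$ fails at $v=7$ and $v=9$, and holds only from $v=10$ onward. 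Hence no completion of your plan (or of the paper's ``straightforward'' proof) can establish part 1 in the stated range $v\ge 7$; the statement itself needs its threshold raised to $v\ge 10$. This is harmless downstream --- Theorem~\ref{Th:second_main} invokes the lemma only for $n$ near $302$ and $1685$ --- but it means the boundary bookkeeping you flag as the main obstacle is not a formality: it is where the stated claim breaks, and a referee (or you) should have caught it by running the two slow classes to their first admissible values.
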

The proof is straightforward.

The next theorem exploits the fact that in the cases $d=4$ and $d=6$ we know the exact value of $D(n,\frac{d}{2}+1,2)$ which enables us to get reasonably small estimates on $N(4)$ and $N(6)$.

\begin{thm}{\label{Th:second_main}}
	Let $N(d)$ be the smallest integer such that for every $n\geq N(d)$ any optimal 
	$(n, M,\{d, d + 2\})$-code $C$  is (after a corresponding translation) a constant-weight code of weight $\frac{d}{2}+1$. Then $N(4)\le302$ and $N(6)=\le1685$.
\end{thm}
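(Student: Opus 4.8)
The plan is to specialize the proof of Theorem~\ref{Th:main} to $d=4$ and $d=6$, making a single decisive substitution: in place of the asymptotic lower bound on $M$ coming from Corollary~\ref{cor:EH2}, I would feed in the \emph{exact} values of $D(n,3,2)$ and $D(n,4,2)$ listed above. Since an optimal code satisfies $M\ge D(n,\frac{d}{2}+1,2)$ and each tabulated formula has the shape $\frac{v^2+O(v)}{6}$ (resp.\ $\frac{v^2+O(v)}{12}$), I would first take the smallest formula over all residue classes to obtain the clean universal lower bounds $M\ge\frac{n^2-2n-2}{6}$ for $d=4$ and $M\ge\frac{n^2-3n-6}{12}$ for $d=6$. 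This explicit quadratic in $n$ is exactly what replaces $\frac{\lambda n^2}{(\frac{d}{2}+1)\frac{d}{2}}$ throughout the earlier argument.

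I would then reproduce the translation step, passing to $C'=C+\vek{y}_0$ so that $|L|\ge M/\binom{d+2}{\frac{d}{2}+1}$, where $\binom{6}{3}=20$ when $d=4$ and $\binom{8}{4}=70$ when $d=6$, and inspect a maximal-weight codeword $\vek{z}_0\in C'$ of weight $w$. For each admissible $w>\frac{d}{2}+1$ I would evaluate the upper bound on $|L|$ provided by cases~(1)--(3) of that proof with $d$ now a concrete number. Cases~(1) and~(2) yield bounds on $M$ that are absolute constants (for $d=4$ the largest is $M\le420$, attained at $w=7$; for $d=6$ the largest is $M\le1960$, attained at $w=8$), so the quadratic lower bound overwhelms them already for $n$ of the order of a few dozen. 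The genuinely binding constraint is case~(3), $w=\frac{d}{2}+3$, whose right-hand side is only \emph{linear} in $n$: specializing it gives $M\le50n-50$ for $d=4$ and $M\le140n+210$ for $d=6$.

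The heart of the matter is then to locate the crossover at which the quadratic lower bound on $M$ overtakes the linear case-(3) bound. For $d=4$ the inequality $\frac{n^2-2n-2}{6}>50n-50$ rearranges to $n^2-302n+298>0$, whose smallest integer solution is $n=302$; for $d=6$ the inequality $\frac{n^2-3n-6}{12}>140n+210$ rearranges to $n^2-1683n-2526>0$, whose smallest integer solution is $n=1685$. These two elementary root estimates are precisely what produce $N(4)\le302$ and $N(6)\le1685$, and one checks that the constant thresholds from cases~(1)--(2) lie comfortably below them, so that for $n$ past the stated bound no codeword of $C'$ can have weight exceeding $\frac{d}{2}+1$.

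Finally I would discard the at most one codeword of weight $\frac{d}{2}-1$: the intermediate weight $\frac{d}{2}$ is ruled out on parity grounds and smaller weights by the distance constraints against $L$, exactly as in Theorem~\ref{Th:main}. Such a codeword has support disjoint from every word of $L$, so deleting it leaves a $2$-packing on $n-(\frac{d}{2}-1)$ points and forces $M\le D(n-\frac{d}{2}+1,\frac{d}{2}+1,2)+1$; together with $M\ge D(n,\frac{d}{2}+1,2)$ this contradicts the growth of $D$ recorded in Lemma~\ref{L:step} (equivalently, read off directly from the explicit $D$-formulas), which already holds far below $n=302$ and $n=1685$. I expect the only real difficulty to be bookkeeping: confirming that case~(3) is the dominant constraint in both instances and that neither the small-weight step nor cases~(1)--(2) ever push the threshold above the quadratic-versus-linear crossover.
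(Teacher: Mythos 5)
Your proposal is correct and follows essentially the same route as the paper: substituting the exact packing numbers $D(n,3,2)$ and $D(n,4,2)$ for the asymptotic bound of Corollary~\ref{cor:EH2}, specializing cases (1)--(3) of Theorem~\ref{Th:main} to get the constant bounds $240,420$ (resp.\ $770,1050,1960$) and the linear bounds $50(n-1)$ (resp.\ $140n+210$), locating the quadratic-versus-linear crossover at $n=302$ and $n=1685$, and disposing of the possible weight-$(\frac{d}{2}-1)$ codeword via Lemma~\ref{L:step}. Your numbers and thresholds agree with the paper's in every detail; if anything, your handling of the last step is slightly more careful, since you note that the required growth inequality $D(n,\frac{d}{2}+1,2)\ge D(n-\frac{d}{2}+1,\frac{d}{2}+1,2)+2$ can be read off directly from the explicit formulas (for $d=6$ the paper's Lemma~\ref{L:step} is stated with a shift of $3$ rather than the needed $2$).
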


\begin{proof}
	The beginning of the proof coincides with the one from Theorem \ref{Th:main}. Since we know exact values of $D(n,\frac{d}{2}+1,2)$ for $d=4$ and $d=6$, instead of inequality (\ref{eq:lowerbound}) we have that
	\begin{equation}{\label{eq:l4}}
		M\geq D(n,\frac{d}{2}+1,2).
	\end{equation}
	Let us consider the case $d=4$. From inequality (\ref{eq:l4}) follows that $M\geq \frac{n^2-2n-2}{6}$. Therefore, (\ref{eq:l1}), (\ref{eq:l2}) and 
	(\ref{eq:l3}) can be rewritten in the following way: $\frac{n^2-2n-2}{6}\leq 240$, $\frac{n^2-2n-2}{6}\leq 420$ and $\frac{n^2-2n-2}{6}\leq 50(n-1)$ correspondingly. Starting from $n=302$ all these inequalities do not take place and an optimal code does not contain codewords of weight more than $3$. By the fact that $C$ is optimal and by Lemma \ref{L:step} it is guaranteed that there are no codewords of weight $1$, too. Therefore, $C$ is a constant-weight code.   
	
	The last case is $d=6$. Inequality (\ref{eq:l4}) gives us $M\geq \frac{n^2-3n-6}{12}$. Again we take inequalities (\ref{eq:l1}), (\ref{eq:l2}) and (\ref{eq:l3}) (the second one gives us two inequalities for $d=6$):  $\frac{n^2-3n-6}{12}\leq 770$, $\frac{n^2-3n-6}{12}\leq 1050$, $\frac{n^2-3n-6}{12}\leq 1960$ and $\frac{n^2-3n-6}{12}\leq 140n + 210$. Starting from $n=1685$ all these inequalities fail and an optimal code does not contain codewords of weight more than $5$. Again we use Lemma \ref{L:step} to show that $C$ is a constant-weight code.   
\end{proof}

It is expected that this estimates are rather rough and the 
actual value of $N(d)$ is much smaller.

\section*{Acknowledgements}

The first author was supported by
the Bulgarian National Science Research Fund
under Contract KP-06-N72/6 - 2023.
The research of the second author was supported 
by the NSP P. Beron project CP-MACT.

%=================================================================================

\end{document}